\def\tank#1{\protected@xdef\@thanks{\@thanks
        \protect\footnotetext[0]{#1}}}
\def\bigfoot{

    \@footnotetext}
\newcommand{\ea}{\end{array}}
\newtheorem{thm}{Theorem}[section]
\newtheorem{prop}{Proposition}[section]
\newtheorem{ass}{Assumption}[section]
\numberwithin{equation}{section}
\newenvironment{proof}{Proof}{\hfill $\Box$}
\def\RR{\mathbb{R}}
\def\PP{\mathbb{P}}
\def\NN{\mathbb{N}}
\def\cB{{\mathcal B}}
\def\si{{\sigma}}
\def\si{{\sigma}}
\def\si{{\sigma}}
\begin{document}
\title{\Large \bf Weak irreducibility of stochastic delay differential equation driven by pure jump noise}
\date{}

\author{
{Hao Yang}$^1$\footnote{E-mail:yanghao@hfut.edu.cn}~~~{Jian Wang}$^2$\footnote{E-mail:20230078@hznu.edu.cn}
\\
 \small 1.School of Mathematics, Hefei University of Technology, Hefei, Anhui 230009, China.\\
 \small 2. School of Mathematics, Hangzhou Normal University, Hangzhou 311121, China. 
}

\maketitle

\begin{center}
\begin{minipage}{130mm}
{\bf Abstract:}
In this paper, we study the weak irreducibility of stochastic delay differential equations(SDDEs) driven by pure jump noise. The main contribution of this paper is to provide a concise proof of weak irreducibility, releasing condition (A1-3) in Assumption 2.1 from the literature \cite{WYZZ1}. As an application, we derive the weak irreducibility of SDDEs with weakly dissipative coefficients. An important novelty of this paper is to allow the driving noises to be degenerate. This closes the gap of the irreducibility of SDDEs driven by pure jump noise.


\vspace{3mm} {\bf Keywords:}
Weak irreducibility; pure jump noise; weakly dissipative; degenerate

\vspace{3mm} {\bf AMS Subject Classification (2020):}
34K50; 60H10; 60G51; 37A50.
\end{minipage}
\end{center}

\newpage

\renewcommand\baselinestretch{1.2}
\setlength{\baselineskip}{0.28in}
\section{Introduction and motivation}\label{Intr}
Let $H$ be a topological space with Borel $\sigma$-field $\mathcal{B}(H)$, and let $\mathbb{X}:=\{X^{\xi}(t),t\geq0; \xi\in H\}$ be an $H$-valued  Markov process on some
probability space $(\Omega,\mathcal{F},\PP)$.
$\mathbb{X}$ is said to be  weakly irreducible (also called accessible) to
$\xi_0\in H$ if the resolvent $R_{\lambda}, \lambda>0$ satisfies
\begin{equation*}
R_{\lambda}(\xi,U):=\lambda \int_0^{\infty}e^{-\lambda t}\PP(X^\xi(t)\in U)dt>0
\end{equation*}
for all $\xi\in H$ and all neighborhoods $U$ of $\xi_0$, where $\lambda > 0$ is arbitrary.

%

The significance of studying weak irreducibility lies in its contribution to the ergodicity of Markov processes on Polish spaces. Indeed, weak irreducibility, when combined with a smoothing property (such as the asymptotic strong Feller property \cite{Hairer M 2006}) or contractivity (e-property \cite{Kapica} or eventual continuity \cite{Gong-2025}) of the Markov semigroup, implies that there can be at most one invariant probability measure. It is noteworthy that eventual continuity is a weaker condition than both the asymptotic strong Feller property and the e-property. Specifically, there exist Markov-Feller processes that exhibit eventual continuity but do not satisfy either the asymptotic strong Feller property or the e-property; further details can be found in references \cite{Gong-2024,Gong-2025,Liu-2024}.

Now we survey the existing results concerning the weak  irreduciblility of stochastic
systems driven by pure jump noise. For the case of stochastic differential equations(SDEs), the authors in \cite{kulyk} derived the support property of SDEs driven by pure jump noise under local Lipschitz condition, which strongly implies weak  irreduciblility. For such support property, one can also refer to \cite{ishikawa, Kunita, simon, simon1}. For the case of SPDEs,  the authors in
\cite{Xulihu} obtained the weak  irreduciblility to zero of stochastic real valued Ginzburg-Landau
equation on torus $\mathbb{T}=\mathbb{R}\setminus \mathbb{Z}$ driven by cylindrical symmetric $\alpha$-stable process with
$\alpha\in (1,2)$. In \cite{yang}, the authors extended the result in \cite{Xulihu} to $n$ dimensions. The authors in \cite{deng} obtained
the weak  irreduciblility to zero of a class of semilinear SPDEs with Lipschitz coeﬃcients
driven by multiplicative pure jump noise. Since their methods of obtaining the weak  irreduciblility are basically along the
same lines as that for the Gaussian case, the driving noises they considered are
a class of subordinated Wiener processes. In our recent papers \cite{WYZZ, WYZZ1}, we developed new criterions for the irreducibility of
SPDEs driven by multiplicative pure jump noise and weak  irreduciblility of SPDEs driven by additive pure jump noise. The driven noises in \cite{WYZZ, WYZZ1} could be very weak, including a large class of Lévy processes with heavy tails and a large class of compound
Poisson processe, etc.

Delay differential equations arise from evolutionary phenomena with time delay in fields such as physics, biology, and engineering \cite{Hale}. Specifically, such equations have been used to describe the dynamic behaviors of materials with thermal memory, biochemical reactions, and population models. For instance, in population dynamics systems, delayed Lotka-Volterra models, chemotaxis models, SIR models are often employed to depict the evolution trends of animal and microbial systems over time. To consider the irreducibility of such SDDEs, it is generally necessary to consider the infinite-dimensional fragment process, so we need to analyze the property of whole path. In this case, it is a difficult problem to study the irreducibility of solution maps driven by pure jump L\'evy noise. To the best of our knowledge, there are no results on the irreducibility of SDDEs driven by pure jump noise. This strongly motivates the current paper.

In this paper, we establish the weak  irreduciblility of SDDEs driven by additive pure jump noise; see Theorem \ref{thm1} in Section 2. The main contribution of this paper is to provide a concise proof of weak irreducibility, releasing condition (A1-3) in Assumption 2.1 from the literature \cite{WYZZ1}. As an application, we obtain the weak  irreduciblility for SDDEs driven by pure jump noise with weakly dissipative coefficients; see Proposition \ref{prop 1}. We remark that an important novelty is to allow the driving noises to be degenerate and we only require the L\'evy measure to be symmetric. This closes the gap of the irreducibility of SDDEs
driven by pure jump noise.

The organization of the paper is as follows.
In Section 2, we introduce the precise
framework and the main result. In Section 3, we provide some applications to
SDDEs, including the weak  irreduciblility of a class of SDDEs with weakly dissipative coefficients.

\section{The framework and main results}
Unless otherwise specified, we will use the following notation. Let
$|\cdot|$ denote  the Euclidean norm on  $\mathbb{R}^d$ with the  corresponding  inner product  $\langle\cdot,\cdot\rangle$ and $(\Omega,{\mathcal{F}},P)$ be a complete probability space with a filtration $\{\mathcal{F}_t\}$ satisfying the usual conditions. For a vector or matrix $A$, its transpose is denoted by $A^T$ and $|A|={\rm Trace}(A^TA)$ denotes its trace norm. Denote by $D((-\infty,0];\mathbb{R}^n)$ the family of function from $(-\infty,0]$ to $\mathbb{R}^n$. In this paper, we choose the phase space $D_r~(r>0)$ defined by
\begin{equation*}
  D_r=\{\varphi\in D((-\infty,0];\mathbb{R}^n):\lim_{\theta\rightarrow -\infty}e^{r\theta}\varphi(\theta)~\rm{exists~in}~\mathbb{R}^n\},
\end{equation*}
which is a Banach space with norm $\|\varphi\|_r=\sup_{-\infty<\theta\leq 0}e^{r\theta}|\varphi(\theta)|<\infty.$ Note that all bounded and continuous functions are contained in this space, and for any $0<r_1<r_2<\infty$, $D_{r_1}\subset D_{r_2}$. Let $\mathcal{B}(D_r)$ denote the $\sigma$-algebra generated by $D_r,$ and $\mathcal{P}$ be the family of all probability measures on $(D_r,\mathcal{B}(D_r))$.  We also define probability measure set $M_\kappa$ as
\begin{equation*}
  M_\kappa:=\{\mu\in M_0;\mu^{(\kappa)}:=\int_{-\infty}^{0}e^{-\kappa\theta}\mu(d\theta)<\infty\}
\end{equation*}
for any $\kappa>0,$ where $M_0$ is the set of probability measures on $(-\infty,0].$
\par
Let $\nu$ denote a  given  $\si$-finite intensity measure of a L\'evy process  on $\mathbb{R}^n$. Recall that $\nu(\{0\})=0$ and $\int_{\mathbb{R}^n}(|z|^2\wedge1)\nu(dz)<\infty$.
Let $N: \cB(\RR^n\times\RR^+) \times \Omega\rightarrow \bar{\NN}$ be the time homogeneous Poisson random measure with intensity measure $\nu$. Again $\tilde{N} (dz,dt) = N(dz,dt) - \nu(dz)dt$ denotes the  compensated Poisson random measure associated to $N$.
\vskip 0.2cm
Let us point out that (as shown by e.g., \cite[Theorems 4.23 and 6.8]{PZ 2007}) in this case  $$L(t)=\int_0^t\int_{0<|z|\leq1}z\tilde{N}(dz,ds) + \int_0^t\int_{|z|>1} zN(dz,ds), t\geq0$$ defines an $\RR^n$-valued L\'evy process.

Consider an $n$-dimensional FSDE
\begin{equation}\label{yang-0}
dx(t)=f(x_t)dt+dL(t)
\end{equation}
with the initial value $\xi\in D_r$, where $x_t=x_t(\theta)=:\{x(t+\theta),-\infty<\theta\leq 0\},$ which sometimes we also denote by $x_t^{\xi}$ to indicate the initial value is $\xi$.  $f:D_r\rightarrow \mathbb{R}^n$ is Borel measurable. We also need to consider the following equations:
\begin{eqnarray}\label{yang-5}
  &&dX^{\epsilon}(t)= f(X^{\epsilon}_t)dt+\int_{0<\|z\|\leq \epsilon}z \tilde{N}(dt,dz),~~\epsilon\in(0,1)
\end{eqnarray}
and
\begin{eqnarray}\label{yang-4}
  &&dX(t)= f(X_t)dt.
\end{eqnarray}
In the following, we use the notation $x^{\xi}$, $X^{\epsilon,\xi}$ and $X^\xi$ to indicate the solutions of (\ref{yang-0}), (\ref{yang-5}) and (\ref{yang-4}) starting from $\xi$, respectively.
\vskip 0.2cm
We introduce the following assumption.

\begin{ass}\label{A}
\begin{itemize}
  \item[{\bf (A0)}]  $\nu$ is symmetric, i.e., $\nu(C)=\nu(-C)$, $\forall C\in\mathcal{B}(\mathbb{R}^n)$.

 \item[{\bf (A1)}] For any $\xi \in D_r$, there exist unique global solutions $x^\xi$, $X^{\epsilon,\xi}$ and $X^\xi$ to (\ref{yang-0}), (\ref{yang-5}) and (\ref{yang-4}), respectively, satisfying
     \begin{itemize}
    \item[(A1-1)] For any $\xi \in D_r$, $\lim_{t \rightarrow \infty} \| X^\xi_t \|_r = 0$.

 \item[(A1-2)] For any $t>0$ and $\xi \in D_r$, $\lim_{\epsilon \rightarrow 0} \|X^{\epsilon,\xi}_t - X^{\xi}_t \|_r = 0$ in probability.

\end{itemize}

\end{itemize}
\end{ass}
The main result of this paper is as follows:
 \begin{thm}\label{thm1}
 Assume that Assumption \ref{A} holds, then $\{x^\xi\}_{\xi\in D_r}$ is weakly irreducible to zero.
 \end{thm}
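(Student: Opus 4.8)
The plan is to prove $R_\lambda(\xi,U)>0$ directly, for every $\xi\in D_r$, every $\lambda>0$ and every open neighbourhood $U$ of $0$ in $D_r$, by comparing $x^\xi$ with the small-jump equation $(\ref{yang-5})$ on the event that no large jumps occur up to time $t$, and using the symmetry assumption (A0) to annihilate the compensator drift left over by this exclusion. First I fix $\delta>0$ with $\{\varphi\in D_r:\|\varphi\|_r<2\delta\}\subseteq U$ and, using (A1-1), choose $T_0=T_0(\xi,\delta)$ with $\|X^\xi_t\|_r<\delta$ for all $t\ge T_0$. It then suffices to prove $\PP(x^\xi_t\in U)>0$ for every $t\ge T_0$, since $R_\lambda(\xi,U)\ge\lambda\int_{T_0}^\infty e^{-\lambda t}\PP(x^\xi_t\in U)\,dt>0$. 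The structural point that makes this feasible in the abstract setting is that $f$ is only assumed measurable, so no quantitative stability estimate (Gronwall, etc.) is available; hence I treat one time $t$ at a time and allow the remaining parameters, in particular the jump-size cutoff, to depend on $t$, so that only the fixed-time statement (A1-2) is needed.

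Fix $t\ge T_0$. By (A1-2) I choose a cutoff $\epsilon=\epsilon(t)\in(0,1)$ with $\PP(\|X^{\epsilon,\xi}_t-X^\xi_t\|_r<\delta)>0$; on that event $\|X^{\epsilon,\xi}_t\|_r<2\delta$, hence $\PP(X^{\epsilon,\xi}_t\in U)>0$. Since $\int_{\mathbb{R}^n}(|z|^2\wedge1)\,\nu(dz)<\infty$ we have $\nu(\{|z|>\epsilon\})<\infty$, so the event $A_t$ that $N$ has no atom in $\{|z|>\epsilon\}\times[0,t]$ has probability $e^{-t\nu(\{|z|>\epsilon\})}>0$; moreover $A_t$ is measurable with respect to the restriction of $N$ to $\{|z|>\epsilon\}\times[0,\infty)$, while $X^{\epsilon,\xi}$ is a functional of the restriction of $N$ to $\{0<|z|\le\epsilon\}\times[0,\infty)$, so $A_t$ and $X^{\epsilon,\xi}_t$ are independent. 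The crux is the pathwise identification: on $A_t$, $x^\xi$ and $X^{\epsilon,\xi}$ coincide on $[0,t]$, whence $x^\xi_t=X^{\epsilon,\xi}_t$. Indeed, on $A_t$ the large-jump integral $\int_0^s\int_{|z|>1}zN(dz,dr)$ vanishes for $s\le t$ (as $\epsilon<1$), and the truncated compensated integral $\int_0^s\int_{\epsilon<|z|\le1}z\tilde N(dz,dr)$ reduces to $-s\int_{\epsilon<|z|\le1}z\,\nu(dz)$, which equals $0$ because $\nu$ restricted to that annulus is a finite measure (again by $\int(|z|^2\wedge1)\nu(dz)<\infty$) and is symmetric by (A0); therefore on $A_t$ the driving term of $(\ref{yang-0})$ reduces on $[0,t]$ to $\int_0^s\int_{0<|z|\le\epsilon}z\tilde N(dz,dr)$, so the restriction of $x^\xi$ to $[0,t]$ solves $(\ref{yang-5})$ with initial segment $\xi$, and uniqueness in Assumption \ref{A} gives $x^\xi\equiv X^{\epsilon,\xi}$ on $[0,t]$ (the two segments also agree on $(-\infty,-t)$, where both equal $\xi$). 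Combining the three facts, $\PP(x^\xi_t\in U)\ge\PP(A_t\cap\{X^{\epsilon,\xi}_t\in U\})=\PP(A_t)\,\PP(X^{\epsilon,\xi}_t\in U)>0$, as required.

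The step I expect to be the main obstacle, and the one where symmetry is genuinely essential, is precisely this pathwise identification on $A_t$: merely excluding the large jumps still leaves the deterministic compensator drift $-s\int_{\epsilon<|z|\le1}z\,\nu(dz)$ in the equation, and without (A0) one would be forced to compare $x^\xi$ with a drift-perturbed auxiliary equation instead of with $(\ref{yang-5})$ — which is exactly what condition (A1-3) of \cite{WYZZ1} was introduced to control. Assuming $\nu$ symmetric makes the offending term disappear identically, which is what yields both the shorter proof and the admissibility of degenerate noises. Secondary points that still require care are: verifying that $X^{\epsilon,\xi}$ is genuinely a functional of the small-jump part of $N$ (a consequence of the strong well-posedness postulated in (A1), which also underlies the uniqueness step); checking $\nu(\{|z|>\epsilon\})<\infty$ so that $\PP(A_t)>0$; and confirming that the independence of the two disjoint portions of the Poisson random measure indeed factorizes $\PP(A_t\cap\{X^{\epsilon,\xi}_t\in U\})$ into the product used above.
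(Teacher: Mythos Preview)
Your proof is correct and follows essentially the same approach as the paper's: both fix a time beyond which the deterministic flow is near zero (via (A1-1)), choose a cutoff $\epsilon$ depending on that time so that $X^{\epsilon,\xi}_t$ is near $X^\xi_t$ with positive probability (via (A1-2)), use symmetry (A0) to identify $x^\xi$ with $X^{\epsilon,\xi}$ on the event of no large jumps up to $t$, and factorize via the independence of the small-jump process and the large-jump first arrival time. Your account is in fact a bit more explicit than the paper's about \emph{why} (A0) is needed---namely to kill the residual compensator drift $-s\int_{\epsilon<|z|\le1}z\,\nu(dz)$ when rewriting $L$ with cutoff $\epsilon$ instead of $1$---but the argument is the same.
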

 \begin{proof}
 Fix $\xi\in D_r$ and arbitrary $\kappa>0$, according to Assumption (A1-1), there exists $\tilde T>0$ such that for any $T>\tilde T$
 
 \begin{equation}\label{yang-yang-1}
 \| X^\xi_T \|_r \leq \frac{\kappa}{2}.
 \end{equation}
Assumption (A1-2) implies that for any $T>\tilde T$, there exists $\tilde{\epsilon}=\tilde{\epsilon}(T)>0$ such that

\begin{equation}\label{yang-yang-2}
P(\|X^{\tilde{\epsilon},\xi}_T - X^{\xi}_T \|_r \leq\frac{\kappa}{4})\geq \frac{1}{2}.
\end{equation}

Let $\tau_{\tilde{\epsilon}}^1=\inf\{t\geq 0:\int_0^t\int_{|z|>\tilde{\epsilon}}N(dz,ds) \geq 1\}$. Then $\tau_{\tilde{\epsilon}}^1$ has the exponential distribution with parameter $\nu(|z|>\tilde{\epsilon})<\infty$. In particular,
\begin{eqnarray}\label{eq Zhai 03}
\mathbb{P}(\tau_{\tilde{\epsilon}}^1> s)=e^{-\nu(|z|>\tilde{\epsilon})s}  \text{ and}\,\, \PP(0<\tau_{\tilde{\epsilon}}^1< \infty) =1.
\end{eqnarray}
By virtue of Assumption (A0), we see that
$$L(t)=\int_0^t\int_{0<|z|\leq \tilde{\epsilon}}z\tilde{N}(dz,ds) + \int_0^t\int_{|z|_H>\tilde{\epsilon}} zN(dz,ds),\,\, t\geq0 .$$
Therefore, for any $\xi\in D_r$, $\{x^\xi_t,t\in[0,\tau_{\tilde{\epsilon}}^1)\}$ is the unique solution to (\ref{yang-5}) with the initial data $\xi$ on $t\in[0,\tau_{\tilde{\epsilon}}^1)$. We also note that $\sigma\{X^{\tilde{\epsilon},\xi}_t,t\geq 0\}$ and $\sigma\{\tau_{\tilde{\epsilon}}^1\}$ are independent.

By (\ref{yang-yang-1}), (\ref{yang-yang-2}) and (\ref{eq Zhai 03}), for any fixed $\xi\in D_r$, $\kappa>0$ and $\forall ~T>\tilde T$
\begin{eqnarray*}
&&P(\|x^{\xi}_T\|_r\leq \kappa)\nonumber\\
&=&P(\|x^{\xi}_T-X^{\tilde{\epsilon},\xi}_T +X^{\tilde{\epsilon},\xi}_T- X^{\xi}_T +X^{\xi}_T\|_r\leq \kappa)\nonumber\\
&\geq& P(\|x^{\xi}_T-X^{\tilde{\epsilon},\xi}_T +X^{\tilde{\epsilon},\xi}_T- X^{\xi}_T +X^{\xi}_T\|_r\leq \kappa,\tau_{\tilde{\epsilon}}>T)\nonumber\\
&=&P(\|X^{\tilde{\epsilon},\xi}_T- X^{\xi}_T +X^{\xi}_T\|_r\leq \kappa,\tau_{\tilde{\epsilon}}>T)\nonumber\\
&=&P(\|X^{\tilde{\epsilon},\xi}_T- X^{\xi}_T +X^{\xi}_T\|_r\leq \kappa)P(\tau_{\tilde{\epsilon}}>T)\nonumber\\
&\geq&P(\|X^{\tilde{\epsilon},\xi}_T - X^{\xi}_T \|_r \leq\frac{\kappa}{4})P(\tau_{\tilde{\epsilon}}>T)\nonumber\\
&> &0.
\end{eqnarray*}

The proof of Theorem \ref{thm1} is complete.
 \end{proof}
\section{Applications}
In this section, as an application of the main result  of Theorem \ref{thm1},  we obtain the weak  irreduciblility of a class of nonlinear SDDEs with weakly dissipative coefficients driven by additive pure jump noise.

Let us formulate the  assumptions on the coefficients.

(H1) Assume that the coefficients $f$ satisfy the local Lipschitz condition, that is, for any $k>0,$ there exists a $c_k$ such that
\begin{equation}
|f(\varphi)-f(\psi)|^2\leq c_k||\varphi-\psi||_r^2
\end{equation}
 for those $\varphi,~\phi$ $\in$ $D_r$ with $||\varphi||_r\vee||\phi||_r\leq k$.

(H2) There exist nonnegative constants $\bar\lambda_1,$ $\bar\lambda_2,$ $\bar K_1,$ $\bar K_2$, and probability measures $\mu_1\in M_{(2\vee q_1)r},$ $\mu_2\in M_{2r}$, such that for any $\varphi$, $\psi\in D_r$,
\begin{eqnarray}\label{410}
&&2\langle\varphi(0)-\psi(0),f(\varphi)-f(\psi)\rangle\nonumber\\
&\leq&\bar K_1\int_{-\infty}^{0}H(\varphi(\theta),\psi(\theta))d\mu_1(\theta)-\bar K_2H(\varphi(0),\psi(0))\nonumber\\
&&\quad +\bar\lambda_1\int_{-\infty}^{0}|\varphi(\theta)-\psi(\theta)|^2d\mu_2(\theta)-\bar\lambda_2|\varphi(0)-\psi(0)|^2,
\end{eqnarray}
where $H$ is a continuous nonnegative function defined on $\mathbb{R}^n \times \mathbb{R}^n$ such that $H(x,x)=0$ and $H(x,y)\leq K(|x|^{q_1}+|y|^{q_2})$, $q_1\wedge q_2>0$.


In the sequel, the symbol $C$ will denote a positive generic constant whose value may
change from line to line.


We consider the weak  irreduciblility of (\ref{yang-0}). The main result is stated as follows.

\begin{prop}\label{prop 1}
Assume that (H1), (H2) hold and $\nu$ is symmetric, if $\bar\lambda_2-2r-\bar\lambda_1\mu_2^{(2r)}\geq0$ and $\bar K_2-\bar K_1\mu_1^{(2r)}\geq0$, then the solution $\{x^{\xi}\}_{\xi\in D_r}$ of (\ref{yang-0}) is weakly irreducible to zero.
\end{prop}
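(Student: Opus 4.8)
The plan is to show that, under (H1)--(H2) together with the two displayed inequalities, Assumption \ref{A} holds for $x^{\xi}$, $X^{\epsilon,\xi}$, $X^{\xi}$, and then to invoke Theorem \ref{thm1}. Condition (A0) is exactly the symmetry of $\nu$, which is assumed. For the global well-posedness in (A1), local existence and uniqueness would follow from the local Lipschitz bound (H1) by the standard fixed-point argument, combined for (\ref{yang-0}) and (\ref{yang-5}) with the usual interlacing/$L^{2}$ construction for additive L\'evy noise; non-explosion I would obtain from the dissipativity (H2) with $\psi\equiv\mathbf 0$ (note $f(\mathbf 0)=\mathbf 0$, so $X\equiv\mathbf 0$ solves (\ref{yang-4})), through the Lyapunov functional
\[
V(t)=|X(t)|^{2}+\bar\lambda_{1}\int_{-\infty}^{0}\!\int_{t+\theta}^{t}|X(u)|^{2}\,du\,\mu_{2}(d\theta)+\bar K_{1}\int_{-\infty}^{0}\!\int_{t+\theta}^{t}H(X(u),0)\,du\,\mu_{1}(d\theta).
\]
Using $|\xi(u)|\le e^{-ru}\|\xi\|_{r}$ and $H(x,0)\le K|x|^{q_{1}}$, the inner integrals at $t=0$ are at most $Ce^{-2r\theta}$ and $Ce^{-q_{1}r\theta}$, so $V(0)<\infty$ precisely because $\mu_{2}\in M_{2r}$ and $\mu_{1}\in M_{(2\vee q_{1})r}\subset M_{q_{1}r}$. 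Differentiating $V$ and using that $\mu_{1},\mu_{2}$ are probability measures, the delayed integrals cancel against those appearing in (H2), leaving $\tfrac{d}{dt}V(t)\le(\bar\lambda_{1}-\bar\lambda_{2})|X(t)|^{2}+(\bar K_{1}-\bar K_{2})H(X(t),0)$. Since $\mu_{1}^{(2r)},\mu_{2}^{(2r)}\ge1$ and $r>0$, the hypotheses $\bar\lambda_{2}-2r-\bar\lambda_{1}\mu_{2}^{(2r)}\ge0$ and $\bar K_{2}-\bar K_{1}\mu_{1}^{(2r)}\ge0$ yield $\bar\lambda_{2}-\bar\lambda_{1}\ge2r>0$ and $\bar K_{2}\ge\bar K_{1}$, hence $\tfrac{d}{dt}V\le0$ and $|X(t)|^{2}\le V(t)\le V(0)$ for all $t\ge0$; the deterministic solution is global, and for (\ref{yang-0}), (\ref{yang-5}) It\^o's formula only adds a bounded drift $\int_{0<|z|\le1}|z|^{2}\nu(dz)$ and a martingale, so the same estimate rules out explosion and (by a standard moment estimate) gives $\sup_{\epsilon\in(0,1)}\EE\sup_{s\le t}\|X^{\epsilon,\xi}_{s}\|_{r}^{2}<\infty$ for each $t$.

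The hard part will be (A1-1). From the above, $\tfrac{d}{dt}V(t)\le-(\bar\lambda_{2}-\bar\lambda_{1})|X^{\xi}(t)|^{2}\le-2r|X^{\xi}(t)|^{2}$, so $\int_{0}^{\infty}|X^{\xi}(t)|^{2}\,dt\le V(0)/(2r)<\infty$. Moreover $\|X^{\xi}_{t}\|_{r}\le\max(\|\xi\|_{r},\sqrt{V(0)})$ is bounded, so the right-hand side $f(X^{\xi}_{t})$ of (\ref{yang-4}) is bounded by (H1), $t\mapsto X^{\xi}(t)$ is Lipschitz, and $t\mapsto|X^{\xi}(t)|^{2}$ is uniformly continuous; Barbalat's lemma then gives $|X^{\xi}(t)|\to0$ as $t\to\infty$. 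To lift this to the phase space I would write $\|X^{\xi}_{t}\|_{r}=e^{-rt}\sup_{s\le t}e^{rs}|X^{\xi}(s)|$ and split the supremum over $s\le0$ (bounded by $\|\xi\|_{r}$), $0\le s\le T$ (contributing at most $e^{-r(t-T)}\sqrt{V(0)}$), and $T\le s\le t$ (at most $\sup_{s\ge T}|X^{\xi}(s)|$), then send $t\to\infty$ and $T\to\infty$ to conclude $\|X^{\xi}_{t}\|_{r}\to0$.

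For (A1-2) I would fix $t>0$ and study $Y^{\epsilon}:=X^{\epsilon,\xi}-X^{\xi}$, which vanishes on $(-\infty,0]$ and solves $dY^{\epsilon}(s)=\big(f(X^{\epsilon,\xi}_{s})-f(X^{\xi}_{s})\big)ds+dM^{\epsilon}(s)$ with $M^{\epsilon}(s)=\int_{0}^{s}\!\int_{0<|z|\le\epsilon}z\,\tilde N(dz,du)$; Doob's inequality and the isometry give $\EE\sup_{s\le t}|M^{\epsilon}(s)|^{2}\le4t\int_{0<|z|\le\epsilon}|z|^{2}\nu(dz)\to0$. Since $X^{\xi}$ is deterministic with $\sup_{s\le t}\|X^{\xi}_{s}\|_{r}=:k_{0}<\infty$ and $\sup_{\epsilon}\PP(\sup_{s\le t}\|X^{\epsilon,\xi}_{s}\|_{r}\ge k)\to0$ as $k\to\infty$ by the moment bound, on $\{\sup_{s\le t}\|X^{\epsilon,\xi}_{s}\|_{r}<k\}$ the local Lipschitz bound (H1) with constant $c_{k\vee k_{0}}$ and Gronwall's inequality give $\sup_{s\le t}\|Y^{\epsilon}_{s}\|_{r}\le e^{\sqrt{c_{k\vee k_{0}}}\,t}\sup_{s\le t}|M^{\epsilon}(s)|$; letting $\epsilon\to0$ with $k$ fixed and then $k\to\infty$ yields $\|X^{\epsilon,\xi}_{t}-X^{\xi}_{t}\|_{r}\to0$ in probability.

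With (A0), (A1-1) and (A1-2) in hand, Assumption \ref{A} holds and Theorem \ref{thm1} gives the weak irreducibility to zero of $\{x^{\xi}\}_{\xi\in D_{r}}$. I expect the genuine difficulty to be entirely in the second paragraph: the memory functional $V$ must be chosen so that $V(0)<\infty$ --- which is precisely what the moment hypotheses $\mu_{2}\in M_{2r}$ and $\mu_{1}\in M_{(2\vee q_{1})r}$ buy, taming $\int_{\theta}^{0}|\xi(u)|^{2}du$ and $\int_{\theta}^{0}|\xi(u)|^{q_{1}}du$ --- while its derivative is made nonpositive by the two scalar inequalities of the Proposition; once $\int_{0}^{\infty}|X^{\xi}(t)|^{2}\,dt<\infty$ is established, the passage to $|X^{\xi}(t)|\to0$ and then to $\|X^{\xi}_{t}\|_{r}\to0$ is soft.
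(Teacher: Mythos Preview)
Your overall strategy---verify Assumption \ref{A} and invoke Theorem \ref{thm1}---matches the paper's, and your treatment of (A1-2) via (H1), localization and Gronwall is a legitimate alternative to the paper's use of (H2) together with the stochastic Gronwall inequality. The real problem is in your (A1-1) argument.

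You write ``note $f(\mathbf 0)=\mathbf 0$'' and build the whole Lyapunov computation on it: setting $\psi\equiv 0$ in (H2) only bounds $2\langle\varphi(0),f(\varphi)-f(0)\rangle$, so the conclusion $\tfrac{d}{dt}V\le(\bar\lambda_1-\bar\lambda_2)|X(t)|^2+(\bar K_1-\bar K_2)H(X(t),0)$ is valid \emph{only} when $f(0)=0$. But $f(0)=0$ is nowhere assumed in (H1)--(H2), and it fails already in the paper's own Example, where $f(0)=1$. With $f(0)\neq 0$ one picks up an extra term $2\langle X(t),f(0)\rangle\le\delta|X(t)|^2+\tfrac1\delta|f(0)|^2$, and your inequality becomes $\tfrac{d}{dt}V\le -(2r-\delta)|X(t)|^2+L$ with $L>0$. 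From this you can no longer get $\int_0^\infty|X(t)|^2\,dt<\infty$, so the Barbalat step and the claim $|X^\xi(t)|\to 0$ collapse.

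The paper avoids this trap by never trying to prove $|X^\xi(t)|\to 0$. It works instead with the weighted quantity $e^{2rt}|X(t)|^2$, absorbs the constant $L$ into a term of order $e^{2rt}$, and obtains $\sup_{t\ge 0}|X(t)|\le C$. Because of the exponential weight in the phase-space norm, mere boundedness already gives
\[
\|X^\xi_t\|_r\le e^{-rt}\sup_{0\le s\le t}|X(s)|+e^{-rt}\|\xi\|_r\longrightarrow 0,
\]
so (A1-1) follows. Your lifting argument at the end of the second paragraph could in fact be salvaged the same way: drop the Barbalat step entirely and observe that even the crude bound $|X(t)|^2\le V(0)+Lt$ coming from $\tfrac{d}{dt}V\le L$ is killed by $e^{-rt}$. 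As written, however, the proof hinges on an assumption that is not part of the hypotheses.
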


%
%
%
%
%

\begin{proof}
We will apply   Theorem \ref{thm1}, that is, we  verify Assumptions (A0)-(A1).  Since (A0) is imposed,  we only need to verify Assumption (A1).

Since $f$ is local Lipschitz continuous and satisfies (\ref{410}), there exist unique global solutions to (\ref{yang-5}) and (\ref{yang-4}), respectively. For more details, one can invoke Theorem V.7 in Protter \cite{Protter} and Remark 1.1 in \cite{deng1}. The unique global solution to (\ref{yang-0}) can be obtained by the solution to (\ref{yang-5}) with $\epsilon=1$ and the so-called interlacing technique; see \cite{Liuwei}.

Note that for any $t\geq 0$,
\begin{eqnarray*}
||X_t^{\xi}||_r\leq e^{-rt}\sup_{0\leq s\leq t}|X(s)|+e^{-rt}||\xi||_r.
\end{eqnarray*}
So to prove  (A1-1), we only need to prove $\sup_{0\leq t<\infty}|X(t)|\leq C$.

Choose $\psi=0$ in (H2), by the Young inequality, for any $0<\delta<2r$, we derive
\begin{eqnarray}\label{yang--1}
&&2\langle\varphi(0),f(\varphi)\rangle\nonumber\\
&=&2\langle\varphi(0),f(\varphi)-f(0)\rangle+2\varphi^T(0)f(0)\nonumber\\
&\leq&2\langle\varphi(0),f(\varphi)-f(0)\rangle+\frac{1}{\delta}|f(0)|^2+\delta|\varphi(0)|^2\nonumber\\
&\leq&\frac{1}{\delta}|f(0)|^2+\bar K_1\int_{-\infty}^{0}H(\varphi(\theta),0)d\mu_1(\theta)-\bar K_2H(\varphi(0),0)\nonumber\\
&&+\bar\lambda_1\int_{-\infty}^{0}|\varphi(\theta)|^2d\mu_2(\theta)-\bar\lambda_2|\varphi(0)|^2+\delta|\varphi(0)|^2.
\end{eqnarray}
By the chain rule and (\ref{yang--1}),  for every $t>0$,
  \begin{eqnarray}\label{89}
&&e^{2rt}|X(t)|^2\nonumber\\
&=&|\xi(0)|^2+\int_{0}^{t}(2re^{2rs}|X(s)|^2+2e^{2rs}\langle X(s),f(X_s)\rangle)ds\nonumber\\
  &\leq& |\xi(0)|^2+\int_{0}^{t}e^{2rs}\Big(2r|X(s)|^2+L+\bar K_1 \int_{-\infty}^0\widetilde H(X(s+\theta))d\mu_1(\theta)-\bar K_2\widetilde H(X(s))\nonumber\\
  &&+\bar\lambda_1\int_{-\infty}^{0}|X(s+\theta)|^2d\mu_2(\theta)-\bar\lambda_2|X(s)|^2+\delta|X(s)|^2\Big)ds,
\end{eqnarray}
where $L=\frac{1}{\delta}|f(0)|^2$, $\widetilde H(x)=H(x,0)$. Note that
\begin{eqnarray}\label{00002}
  &&\int_{0}^{t}\int_{-\infty}^{0}e^{2rs}|X(s+\theta)|^2d\mu_2(\theta)ds\nonumber\\
  &&=\int_{0}^{t}\Big[\int_{-\infty}^{-s}e^{2r s}|X(s+\theta)|^2d\mu_2(\theta)+\int_{-s}^{0}e^{2rs}|X(s+\theta)|^2d\mu_2(\theta)\Big]ds\nonumber\\
  &&=\int_{0}^{t}ds\int_{-\infty}^{-s}e^{2r(s+\theta)}|X(s+\theta)|^2e^{-2r(s+\theta)+2rs}d\mu_2(\theta)+\int_{-t}^{0}d\mu_2(\theta)\int_{-\theta}^{t}e^{2rs}|X(s+\theta)|^2ds\nonumber\\
   &&\leq t||\xi||_r^2\int_{-\infty}^{-s}e^{-2r\theta}d\mu_2(\theta)+\int_{-t}^{0}e^{-2r\theta}d\mu_2(\theta)\int_{-\theta}^{t}e^{2r(s+\theta)}|X(s+\theta)|^2ds\nonumber\\
    &&\leq t||\xi||_r^2\int_{-\infty}^{0}e^{-2r\theta}\mu_2(\theta)+\int_{-\infty}^{0}e^{-2r\theta}d\mu_2(\theta)\int_{0}^{t}e^{2rs}|X(s)|^2ds\nonumber\\
 &&= t||\xi||_r^2\mu_2^{(2r)}+\mu_2^{(2r)}\int_{0}^{t}e^{2rs}|X(s)|^2ds
\end{eqnarray}
and
\begin{eqnarray}\label{881}
  &&\int_{0}^{t}\int_{-\infty}^{0}e^{2rs}\widetilde H(X(s+\theta))d\mu_1(\theta)ds\nonumber\\
  &&=\int_{0}^{t}\Big[\int_{-\infty}^{-s}e^{2rs}\widetilde H(X(s+\theta))d\mu_1(\theta)+\int_{-s}^{0}e^{2rs}\widetilde H(X(s+\theta))d\mu_1(\theta)\Big]ds\nonumber\\
  &&=\int_{0}^{t}e^{2rs}ds\int_{-\infty}^{-s}\widetilde H(e^{r(s+\theta)}X(s+\theta)e^{-r(s+\theta)})d\mu_1(\theta)+\int_{-t}^{0}d\mu_1(\theta)\int_{-\theta}^{t}e^{2rs}\widetilde H(X(s+\theta))ds\nonumber\\
   &&\leq\int_{0}^{t}\int_{-\infty}^{-s}K||\xi||_r^{q_1}e^{-q_1r(s+\theta)}e^{2rs}d\mu_1(\theta)ds+\int_{-t}^{0}e^{-2r\theta}d\mu_1(\theta)\int_{-\theta}^{t}e^{2r(s+\theta)}\widetilde H(X(s+\theta))ds\nonumber\\
    &&\leq \frac{K}{-(q_1r-2r)}||\xi||_r^{q_1}\mu_1^{(q_1r)}(e^{-(q_1r-2r)t}-1)+\mu_1^{(2r)}\int_{0}^{t}e^{2rs}\widetilde H(X(s))ds.
\end{eqnarray}
According to our assumption,
\begin{equation*}
\bar \lambda_2-2r-\bar\lambda_1\mu_2^{(2r)}\geq0,\quad \bar K_2-\bar K_1\mu_1^{(2r)}\geq0,
\end{equation*}
this together with (\ref{89}), (\ref{00002}), (\ref{881}) yields that
\begin{eqnarray}\label{91}
&&e^{2rt}|X(t)|^2\nonumber\\
&\leq&|\xi(0)|^2+\frac{L}{2r}(e^{2rt}-1)+\bar\lambda_1t||\xi||_r^2\mu_2^{(2r)}\nonumber\\
&&+\frac{\bar K_1K}{-(q_1r-2r)}||\xi||_r^{q_1}\mu_1^{(q_1r)}(e^{-(q_1r-2r)t}-1)+\delta\int_{0}^{t}e^{2rs}|X(s)|^2ds\nonumber\\
&\leq&C(1+e^{2rt})+\delta\int_{0}^{t}e^{2rs}|X(s)|^2ds.
\end{eqnarray}
Applying the Gronwall inequality, we have

\begin{eqnarray*}
 e^{2rt}|X(t)|^2\leq Ce^{2rt},
\end{eqnarray*}
which gives that $$\sup_{0\leq t< \infty}|X(t)|\leq C.$$

\noindent Next, we verify (A1-2).

Note that
\begin{eqnarray*}
  &&X^{\epsilon,\xi}(t)-X^{\xi}(t)=\int_0^t f(X^{\epsilon,\xi}_s)-f(X^{\xi}_s)ds+\int_0^t\int_{0<|z|\leq \epsilon}z \tilde{N}(ds,dz).
\end{eqnarray*}

Applying the Ito formula and (H2) gives
\begin{eqnarray}\label{yang-ineq3}
 &&e^{2rt}|X^{\epsilon,\xi}(t)-X^{\xi}(t)|^2\nonumber\\
 &=&\int_{0}^{t}(2re^{2rs}|X^{\epsilon,\xi}(s)-X^{\xi}(s)|^2+2\langle X^{\epsilon,\xi}(s)-X^{\xi}(s),f(X^{\epsilon,\xi}_s)-f(X^{\xi}_s)\rangle)ds\nonumber\\
 &&+2\int_0^t\int_{0<|z|\leq\epsilon}e^{2rs}\langle X^{\epsilon,\xi}(s)-X^{\xi}(s),z\rangle\tilde{N}(ds,dz)+\int_0^t\int_{0<|z|\leq\epsilon}e^{2rs}|z|^2{N}(ds,dz)\nonumber\\
 &\leq&\int_{0}^{t}e^{2rs}\Big(2r|X^{\epsilon,\xi}(s)-X^{\xi}(s)|^2+\bar\lambda_1\int_{-\infty}^{0}|X^{\epsilon,\xi}(s+\theta)-X^{\xi}(s+\theta)|^2d\mu_2(\theta)\nonumber\\
  &&-\bar\lambda_2|X^{\epsilon,\xi}(s)-X^{\xi}(s)|^2+\bar K_1 \int_{-\infty}^0H(X^{\epsilon,\xi}(s+\theta),X^{\xi}(s+\theta))d\mu_1(\theta)\nonumber\\
  &&-\bar K_2H(X^{\epsilon,\xi}(s),X^{\xi}(s))\Big)ds+2\int_0^t\int_{0<|z|\leq\epsilon}e^{2rs}\langle X^{\epsilon,\xi}(s)-X^{\xi}(s),z\rangle\tilde{N}(ds,dz)\nonumber\\
  &&+\int_0^t\int_{0<|z|\leq\epsilon}e^{2rs}|z|^2{N}(ds,dz).
\end{eqnarray}
According to (\ref{00002}), (\ref{881}) and $H(\xi(\theta),\xi(\theta))=0$,
\begin{equation}\label{yang-ineq4}
\int_{0}^{t}\int_{-\infty}^{0}e^{2rs}|X^{\epsilon,\xi}(s+\theta)-X^{\xi}(s+\theta)|^2d\mu_2(\theta)ds\leq \mu_2^{(2r)}\int_{0}^{t}e^{2rs}|X^{\epsilon,\xi}(s)-X^{\xi}(s)|^2ds
\end{equation}
and
\begin{equation}\label{yang-ineq5}
\int_{0}^{t}\int_{-\infty}^{0}e^{2rs} H(X^{\epsilon,\xi}(s+\theta),X^{\xi}(s+\theta))d\mu_1(\theta)ds\leq \mu_1^{(2r)}\int_{0}^{t}e^{2rs}H(X^{\epsilon,\xi}(s),X^{\xi}(s))ds.
\end{equation}
By (\ref{yang-ineq3}), (\ref{yang-ineq4}), (\ref{yang-ineq5}) and $\bar\lambda_2-2r-\bar\lambda_1\mu_2^{(2r)}\geq0$ and $\bar K_2-\bar K_1\mu_1^{(2r)}\geq0$, we have
\begin{eqnarray}
 &&e^{2rt}|X^{\epsilon,\xi}(t)-X^{\xi}(t)|^2\nonumber\\
&\leq&-(\bar\lambda_2-2r-\bar\lambda_1\mu_2^{(2r)})\int_{0}^{t}e^{2r s}|X^{\epsilon,\xi}(s)-X^{\xi}(s)|^2ds\nonumber\\
   &&-(\bar K_2-\bar K_1\mu_1^{(2r)})\int_{0}^te^{2rs}H(X^{\epsilon,\xi}(s),X^{\xi}(s))ds\nonumber\\
   &&+2\int_0^t\int_{0<|z|\leq\epsilon}e^{2rs}\langle X^{\epsilon,\xi}(s)-X^{\xi}(s),z\rangle\tilde{N}(ds,dz)\nonumber\\
  &&+\int_0^t\int_{0<|z|\leq\epsilon}e^{2rs}|z|^2{N}(ds,dz)\nonumber\\
&\leq& 2\int_0^t\int_{0<|z|\leq\epsilon}e^{2rs}\langle X^{\epsilon,\xi}(s)-X^{\xi}(s),z\rangle\tilde{N}(ds,dz)\nonumber\\
  &&+\int_0^t\int_{0<|z|\leq\epsilon}e^{2rs}|z|^2{N}(ds,dz).
\end{eqnarray}
Then by the stochastic Gronwall inequality (see Lemma 3.7 in \cite{XZ}), for any $0<q<p<1$
\begin{eqnarray*}
\mathbb{E}\Big(\sup_{0\leq s\leq t}e^{2rs}|X^{\epsilon,\xi}(t)-X^{\xi}(t)|^2\Big)^q\leq\frac{p}{p-q}\Big(\int_0^t\int_{0<|z|\leq \epsilon}e^{2rs}|z|^2\nu(dz)ds\Big)^q.
\end{eqnarray*}
Since

\begin{equation*}
\|X^{\epsilon,\xi}_t-X^{\xi}_t\|^2_r\leq e^{-2rt}\sup_{0\leq s\leq t}e^{2rs}|X^{\epsilon,\xi}(t)-X^{\xi}(t)|^2,
\end{equation*}
we have

\begin{eqnarray}
\mathbb{E}\|X^{\epsilon,\xi}_t-X^{\xi}_t\|_r^{2q}\leq e^{-2qrt}\frac{p}{p-q}\Big(\int_0^t\int_{0<|z|\leq \epsilon}e^{2rs}|z|^2\nu(dz)ds\Big)^q.
\end{eqnarray}
 Observing that, in this setting, the convergence
is stronger than the one (in probability) required in condition (A1-2),
so (A1-2) holds. So the proof of Proposition \ref{prop 1} is also complete.
\end{proof}
\vspace*{4pt}\noindent {\bf Example.}
Let us consider a functional SDE with nonlinear distribution delay,
  \begin{equation}\label{non}
    dx(t)=[1-2x(t)-2x^3(t)+\int_{-\infty}^{0}x^2(t+\theta)d\mu_1(\theta)]dt+dL(t)
  \end{equation}
  on $t\geq 0$ with initial data $\{\xi(t):-\infty<t\leq 0\}\in D_r$, where $L(t)$ is a one-dimensional jump process. To verify (\ref{410}), note that 
   \begin{eqnarray*}
  &&2\langle\varphi(0)-\psi(0),f(\varphi)-f(\psi)\rangle\nonumber\\
  &=&2(\varphi(0)-\psi(0))\Big(-2(\varphi(0)-\psi(0))-2(\varphi^3(0)-\psi^3(0))+\int_{-\infty}^{0}\varphi^2(\theta)-\psi^2(\theta)d\mu_1(\theta)\Big)\nonumber\\
  &\leq&-3(\varphi(0)-\psi(0))^2-4(\varphi(0)-\psi(0))^2\big(\varphi^2(0)+\varphi(0)\psi(0)+\psi^2(0)\big)\nonumber\\
  &&+\int_{-\infty}^{0}(\varphi(\theta)-\psi(\theta))^2(\varphi(\theta)+\psi(\theta))^2d\mu_1(\theta)\nonumber\\
  &\leq&-3(\varphi(0)-\psi(0))^2-2(\varphi(0)-\psi(0))^2\big(2\varphi^2(0)+2\varphi(0)\psi(0)+2\psi^2(0)\big)\nonumber\\
  &&+\int_{-\infty}^{0}(\varphi(\theta)-\psi(\theta))^2\big(2\varphi^2(\theta)+2\varphi(\theta)\psi(\theta)+2\psi^2(\theta)\big)d\mu_1(\theta)\nonumber.
\end{eqnarray*}
Choose $\bar\lambda_1=0,$ $\bar\lambda_2=3$, $\bar K_1=1,$ $\bar K_2=2$ and $H(x,y)=(x-y)^2(2x^2+2xy+2y^2)$, then $H(x,y)\leq 12(x^4+y^4).$
 If $3-2r\geq 0$, $2-\mu_1^{(2r)}\geq0$ and $\mu_1\in M_{4r}$, according to Proposition \ref{prop 1},  the solution $\{x^{\xi}\}_{\xi\in D_r}$ of (\ref{non}) is weakly irreducible to zero.

\vskip 0.4cm
\noindent{\bf Acknowledgements.}
This work is partially supported by National Natural Science Foundation
 of China (Nos. 12401175, 12371151), Scientific Research Fund of Zhejiang Provincial
Education Department (No. Y202455933), the Natural Science Foundation of Zhejiang Provincial (No. LQN25A010020), the Fundamental Research Funds for the Central
Universities (No. JZ2025HGTB0172).
\vskip 0.4cm

\noindent{\bf Statements and Declarations} This research has no experiments. The authors declare that they have no
conflict of interest.

\end{document}